\newtheorem{thm}{Theorem}[section]
\newtheorem{cor}[thm]{Corollary}
\newtheorem{lem}[thm]{Lemma}
\theoremstyle{definition}
\theoremstyle{remark}
\newtheorem{rem}[thm]{Remark}
\numberwithin{equation}{section}
\newcommand{\R}{\mathbb R}
\newcommand{\cP}{\mathcal{P}}
\newcommand{\M}{\mathcal{M}}
\newcommand{\B}{\mathcal{B}}
\begin{document}

\title[Some estimates for imaginary powers of the Laplace operator ]{Some estimates for imaginary powers of the  Laplace operator in variable Lebesgue spaces and applications}%

\author{Alberto Fiorenza}
\address{Alberto Fiorenza \\
Dipartimento di Architettura \\
Universit\`a di Napoli \\ Via Monteoliveto, 3 \\
I-80134 Napoli, Italy\\
and Istituto per le Applicazioni del Calcolo
``Mauro Picone", sezione di Napoli \\
Consiglio Nazionale delle Ricerche \\
via Pietro Castellino, 111 \\
 I-80131 Napoli, Italy }
\email{fiorenza@unina.it}

\author{Amiran Gogatishvili}
\address{Amiran Gogatishvili \\
Institute of Mathematics of the \\
Academy of Sciences of the Czech Republic \\
\'Zitna 25 \\
115 67 Prague 1, Czech Republic}
\email{gogatish@math.cas.cz}

\author{Tengiz Kopaliani}
\address{Tengiz Kopaliani \\
Faculty of Exact and Natural Sciences\\
I. Javakhishvili Tbilisi State University\\
 University St. 2\\
 0143 Tbilisi, Georgia}
\email{tengiz.kopaliani@tsu.ge}

\keywords{spherical maximal function, variable Lebesgue spaces, boundedness result, Laplace operator, Mellin transform, wave equation, initial-value problem, propagation}
\subjclass{42B25, 42B20, 46E30, 44A10, 42B10, 35L05}

\thanks{The research of the second and third authors was partly supported by Shota Rustaveli National Science Foundation
grants no.13/06 (Geometry of function spaces, interpolation and embedding theorems) and 31/48 (Operators in some function spaces and their applications in Fourier Analysis). The research of the  second author was  partly supported by grant no. P201-13-14743S  of the Grant Agency of the Czech Republic and RVO: 67985840.}

\begin{abstract}

In this paper we study some estimates of norms in variable exponent Lebesgue spaces for singular integral operators that are imaginary powers of the Laplace operator in $\R^n$.  Using the Mellin transform argument, from these estimates we obtain the boundedness for a family of maximal operators in variable exponent Lebesgue spaces, which are closely related to the (weak) solution of the wave equation.
\end{abstract}
\maketitle
\section{Introduction}

In the recent paper \cite{ FGK} we studied the boundedness of  Stein's spherical maximal function $\mathcal M$ in variable exponent Lebesgue spaces. The proof is based on the Rubio De Francia extrapolation method and one of the corresponding results in weighted Lebesgue spaces.   The Stein's spherical maximal functions are closely related to the solution of the wave equation in  $\R^3$. In order to study the wave equation in  $\R^n$ , $n>3$ we need to consider the more general spherical maximal function $\mathcal M^\alpha$, $\alpha =\frac{3-n}{2}$ (\cite{S1}).
To investigate such operators we use a new approach based on a Mellin transform argument used for the first time by Cowling and  Mauceri in \cite{CM}, which reduces the problem to that one to find sharp estimates for norms of imaginary power of the Laplace operator.  
 
%


 Let $\cP(\R^n)$ be the set of all measurable functions $p: \mathbb{R}^{n} \to[1,\infty]$, which will be
called variable exponents. Set $p^-=\mbox{essinf}_{x\in \mathbb{R}^{n}}p(x)$ and $p^+=\mbox{esssup}_{x\in\mathbb{R}^{n}}p(x)$.
If $p^+ < \infty$, then the variable exponent $p$ is bounded. For $p\in \cP(\R^n)$, let $p'\in \cP(\R^n)$ be defined through $\frac{1}{p(y)} + \frac{1}{p'(y)} = 1$, where we adopt the convention $\frac1\infty:= 0$. The function $p'$ represents the dual variable exponent of $p$ (and the symbol should not be confused with the derivative).

For $p\in \cP(\R^n)$,  let $\mathbb{R}^{n}_\infty$ be the set where $p=+\infty$. The symbol $L^{p(\cdot)}(\mathbb{R}^{n})$ denotes the set of
measurable functions real or complex valued $f$ on $\mathbb{R}^{n}$ such that for some
$\lambda>0$
$$\rho_{p(\cdot)}\left(\frac{f}{\lambda}\right)=
\int_{\mathbb{R}^{n}\backslash \mathbb{R}^{n}_\infty}\left(\frac{|f(x)|}{\lambda}
\right)^{p(x)}dx+\left\|\frac{f}{\lambda}\right\|_{L^\infty(\mathbb{R}^{n}_\infty)}<\infty.
$$
This set becomes a Banach function space when equipped with the norm
$$
\|f\|_{p(\cdot)}=\inf\left\{\lambda>0:\,\,\rho_{p(\cdot)}\left(\frac{f}{\lambda}\right)\leq 1 \right\}.
$$

Let $B(x,r)$ denote the open ball in $\mathbb{R}^{n}$ of radius $r$
and center $x.$ By $|B(x,r)|$ we denote the $n-$dimensional Lebesgue
measure of $B(x,r).$  The Hardy-Littlewood maximal operator $M$ is
defined on locally integrable functions $f$ on $\mathbb{R}^{n}$ by
the formula
$$
Mf(x)=\sup_{r>0}\frac{1}{|B(x,r)|}\int_{B(x,r)}|f(y)|dy.
$$

Define the Spherical Maximal operator $\mathcal{M}$ by
$$
\mathcal{M}f(x):=\sup_{t>0}\left\vert\mu_{t}*f(x)\right\vert=
\sup_{t>0}\left\vert\int_{\{y\in \mathbb{R}^{n}:|y|=1\}} \,
f(x-ty)d\mu_{t}(y)\right\vert
$$
 where  $\mu_{t}$ denotes the
normalized surface measure on the sphere of center $0$ and radius
$t$ in $\mathbb{R}^{n}$. The Hardy-Littlewood maximal operator $M$,
which involves averaging over balls, is clearly related to the
spherical maximal operator, which averages over spheres. Indeed, by
using polar coordinates, one easily verifies the pointwise
inequality $Mf(x)\leq \mathcal{M} f(x)$ for any (continuous)
function.


A function $p: \mathbb{R}^{n} \to(0,\infty)$ is said to be locally log-H\"older continuous on $ \mathbb{R}^{n} $ if there exists $c_1 >
0$ such that
$$|p(x)-p(y)|\le c_1 \frac{1}{\log(e + 1/|x- y|)}$$
for all $x, y\in  \mathbb{R}^{n} $, $|x- y|<1/2$.  Moreover, $p(\cdot)$
satisfies the log-H\"older decay condition if there exist
$p_\infty\in (0,\infty)$ and a constant $c_2 > 0$ such that
$$|p(x)-p_\infty|\le c_2 \frac{1}{\log(e + |x|)}$$
for all $x\in  \mathbb{R}^{n} $. We say that $p(\cdot)$ is globally
log-H\"older continuous on $ \mathbb{R}^{n} $  if it is locally log-H\"older continuous and satisfies
the log-H\"older decay condition.
We write $ p(\cdot)\in
\cP^{\log}(\R^n)$ if $p(\cdot)\in \cP(\R^n)$ and $\frac{1}{p(\cdot)}$  is globally
log-H\"older continuous on $ \mathbb{R}^{n}$
If $p(\cdot)\in \cP(\R^n)$  with $p_+ <\infty$, then $ p(\cdot)\in
\cP^{\log}(\R^n)$ if and only if 
$p$ is globally log-H\"older continuous on $\R^n$.
If $p\in \cP^{\log}(\R^n)$ and  $p^->1$, then the classical boundedness theorem
for the Hardy-Littlewood maximal operator can be extended to
$L^{p(\cdot)}$ (see \cite{CUFN1, CUFN2, CDF}). For more information on variable Lebesgue spaces, log-H\"older continuity conditions and their relationship with the boundedness of the Hardy-Littlewood maximal operator, see the monographs \cite{CUF, DHHR}. If $n\ge 3$, $p\in \cP^{\log}(\R^n)$ and $\frac{n}{n-1}<p^-\le p^+<p^-(n-1)$, then the boundedness theorem for the spherical maximal function $\M$  in $L^{p(\cdot)}$  was proved in \cite{FGK}. 

We will denote by $\B(\R^n)$ the class all measurable functions $p: \mathbb{R}^{n} \to(0,\infty)$  for which  the Hardy-Littlewood maximal operator is bounded on $L^{p(\cdot)}$.

Throughout the paper, we denote by $c$, $C$, $c_1$, $C_1$, $c_2$, $C_2$,  etc. 
positive constant which is independent of the main parameters but which may vary
from line to line.

\section{Imaginary power of Laplace operator in variable Lebesgue spaces }

Let $S(\mathbb{R}^{n})$ denote the Schwartz space, consisting of
sufficiently smooth functions that are rapidly decreasing at
infinity. 
Let $\Delta$ be the standard Laplace operator in $\R^n$, given by
$$\Delta = \sum_{j=1}^n\partial_j^2.$$
If $\hat{f}(\xi)=\int_{\R^n} e^{2\pi x\cdot\xi} f(x)dx$, then 
$$
(-\triangle
f)^{\wedge}(\xi)=|2\pi\xi|^{2}\widehat{f}(\xi),\,\,\,\,f\in
S(\mathbb{R}^{n}).$$

Starting from this relation, it is natural to define $\Delta^{\beta/2}$ for any complex exponent
$\beta$ by
$$
((-\Delta)^{\beta/2})^{\wedge}(\xi)=(2\pi|\xi|)^{\beta}\widehat{f}(\xi),\,\,\,\,f\in
S(\mathbb{R}^{n}).
$$

 In particular, for each $0 <\alpha<n$, the operator 
 $$I_\alpha: f\mapsto(-\Delta)^{-\alpha/2}f
 $$
is known as the Riesz potential. Here $I_\alpha$ may be expressed as
$$I_\alpha f=K_\alpha* f$$
where $K_\alpha(x) =\pi^{-n/2}2^{-\alpha}\Gamma\left(\frac{n-\alpha}{2}\right)/\Gamma\left(\frac{\alpha}{2}\right)|x|^{-n+\alpha}$, the symbol $\Gamma$ denoting the standard gamma function, and therefore (see \cite{S2} p. 117; see also \cite{CUMP, CUF}) $I_\alpha$ is an integral
operator.

In this paper we shall consider the operator
$I_{iu},\,\,u\in\,\mathbb{R}\backslash\{0\},$ given by
$$
I_{iu}f=K_{iu}\ast f,\,\,\,f\in S(\mathbb{R}^{n}),
$$
which makes sense via
$$
(I_{iu}f)^{\wedge}(\xi)=(2\pi|\xi|)^{-iu}\widehat{f}(\xi),\,\,\,f\in
S(\mathbb{R}^{n}),
$$
that is $I_{iu}=(-\Delta)^{\frac{-iu}{2}}$, an imaginary power of
$-\Delta.$ This operator was studied by  Muckenhhoupt \cite{M} in 1960 and
used by Cowling and Mauceri \cite{CM} in 1978 to prove E.M.~Stein's theorem on the
spherical maximal function \cite{S1}. 

Note that $|\widehat{K}(\xi)|=|(2\pi|\xi|)^{-iu}|=1,$ so that by
Plancherel's theorem we have in $L^{2}(\mathbb{R}^{n})$
\begin{equation} \label{2.1}
\|I_{iu}f\|_{2}=\|f\|_{2}.
\end{equation}

By   using further properties of the kernel $K_{iu}$, particularly
the fact that it is locally integrable away from the origin and
satisfies
$$
|K_{iu}(x)|\leq C(1+|u|)^{n/2}|x|^{-n}
$$
and
$$
|\nabla K_{iu}(x)|\leq C(1+|u|)^{n/2+1}|x|^{-n-1}
$$
for $x\neq0$ (see \cite{CM} and \cite{G}), one may observe that $I_{iu}$ also extends to a
bounded operator on $L^{p}_{w}(\mathbb{R}^{n})$.
By $w$ we mean a weight, i.e. a non-negative, locally  integrable function on $\mathbb{R}^{n}$. When $1<p<\infty,$ we say $w\in A_{p}$ if for every ball   $B$
$$
\frac{1}{|B|}\int_{B}w(x)dx\left(\frac{1}{|B|}\int_{B}w(x)^{1-p'}dx\right)^{p-1}
\leq C<\infty.
$$

By $A_{p,w}$ we denote the infimum over the constants on the
right-hand side of the last inequality.


\begin{thm}[\cite{G}] \label{th2.1}
Let be $1<p<\infty$ and $w\in A_{p}.$ For each
$\delta\in (0,1)$ and $u\in\mathbb{R}\backslash\{0\},$
the following
weighted estimate holds wenhever $w\in A_{p},\,1<p<\infty:$
\begin{equation} \label{2.2}
\|I_{u}f\|_{p,w}\leq C(1+|u|)^{n/2+\delta}\|f\|_{p,w}, \,\,\,f\in
L^{p}_{w}(\mathbb{R}^{n}).
\end{equation}
\end{thm}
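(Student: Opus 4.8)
The plan is to treat $I_{iu}$ as a convolution Calderón--Zygmund operator and to run the weighted theory while keeping careful track of how every constant depends on $u$. Two ingredients are already available: the $L^2$ identity \eqref{2.1}, which says the $L^2\to L^2$ norm equals $1$ and in particular is independent of $u$, and the two pointwise kernel bounds $|K_{iu}(x)|\le C(1+|u|)^{n/2}|x|^{-n}$ and $|\nabla K_{iu}(x)|\le C(1+|u|)^{n/2+1}|x|^{-n-1}$. Writing $A_0=C(1+|u|)^{n/2}$ and $A_1=C(1+|u|)^{n/2+1}$, the decisive observation is that $A_1/A_0=c(1+|u|)$; this ratio is the crux of the whole estimate, since a crude use of the gradient bound alone would cost the full extra factor $1+|u|$.

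The main step is to derive a Hörmander-type regularity estimate for $K_{iu}$ whose constant is $(1+|u|)^{n/2}$ up to a logarithm, rather than $(1+|u|)^{n/2+1}$. For $|x|\ge 2|y|$ one has $|x-ty|\ge|x|/2$ for $0\le t\le 1$, so integrating $K_{iu}(x-y)-K_{iu}(x)=-\int_0^1 y\cdot\nabla K_{iu}(x-ty)\,dt$ and also using the triangle inequality with the size bound gives
\[
|K_{iu}(x-y)-K_{iu}(x)|\le C\min\bigl(A_0|x|^{-n},\,A_1|y|\,|x|^{-n-1}\bigr).
\]
The size term dominates for $|x|$ below the crossover radius $r:=(1+|u|)|y|$ and the smoothness term dominates for $|x|\ge r$, so I would split the integral $\int_{|x|\ge 2|y|}$ at $|x|=r$. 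On the annulus $2|y|\le|x|\le r$ the size bound integrates to $CA_0\log(1+|u|)$, while on $|x|\ge r$ the gradient bound integrates to $CA_1|y|\,r^{-1}=CA_0$. Hence
\[
\sup_{y\ne 0}\int_{|x|\ge 2|y|}|K_{iu}(x-y)-K_{iu}(x)|\,dx\le C(1+|u|)^{n/2}\log(e+|u|)=:H.
\]

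With the $L^2$ norm equal to $1$ and the Hörmander constant $H$ in hand, I would invoke the quantitative weighted Calderón--Zygmund theory, in which the relevant constants depend linearly on $1+H$. Concretely, the Calderón--Zygmund decomposition yields the weak $(1,1)$ estimate with constant $C(1+H)$; then, for $0<\eta<1$, the Fefferman--Stein sharp-function bound takes the form $M^{\#}_{\eta}(I_{iu}f)(x)\le C(1+H)\,Mf(x)$, the local part being controlled through Kolmogorov's inequality and the weak $(1,1)$ bound and the tail through $H$. Finally, for $w\in A_p$ and $1<p<\infty$ the inequality $\|g\|_{p,w}\le C\|M^{\#}_{\eta}g\|_{p,w}$ together with the boundedness of $M$ on $L^{p}(w)$ gives
\[
\|I_{iu}f\|_{p,w}\le C(1+H)\|f\|_{p,w}\le C(1+|u|)^{n/2}\log(e+|u|)\,\|f\|_{p,w},
\]
with $C=C(n,p,A_{p,w})$. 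Since $\log(e+|u|)\le C_{\delta}(1+|u|)^{\delta}$ for every $\delta>0$, absorbing the logarithm yields exactly \eqref{2.2}.

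I expect the genuine obstacle to be the refined Hörmander estimate of the second paragraph: one must resist integrating the gradient bound over the entire region (which is what costs the extra power of $1+|u|$) and instead exploit the size bound on the inner annulus, paying only a harmless logarithm. A secondary point requiring care is that the passage from the kernel data to the weighted norm must use a quantitative form of the theory---the sharp maximal function or good-$\lambda$ inequality, or equivalently Rubio de Francia extrapolation from the weighted $L^2$ estimate---so that the final constant is genuinely linear in $1+H$ and the dependence on $u$ is not lost in an uncontrolled way.
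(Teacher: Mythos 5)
Your proposal is correct in substance, but note that the paper itself does not prove Theorem~\ref{th2.1} at all: it is imported verbatim from Gunawan \cite{G} (building on Cowling--Mauceri \cite{CM}), and the authors only use it as a black box to feed into extrapolation in Theorem~\ref{th2.2}. So you are supplying a proof where the paper supplies a citation. Your reconstruction is the standard one and its quantitative heart is right: the $L^2$ norm is exactly $1$ by Plancherel, the two kernel bounds have constants $A_0=C(1+|u|)^{n/2}$ and $A_1=C(1+|u|)^{n/2+1}$, and splitting the H\"ormander integral at the crossover radius $|x|=(1+|u|)|y|$ trades the naive extra factor $1+|u|$ for a factor $\log(e+|u|)$, which is then absorbed into $(1+|u|)^{\delta}$. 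This last step is precisely why the theorem is stated with an arbitrary $\delta\in(0,1)$ rather than with the clean exponent $n/2$, so your argument explains the form of the statement rather than merely reproducing it. Two minor points deserve more care than you give them. First, the integrated H\"ormander condition alone does not yield the pointwise bound $M^{\#}_{\eta}(I_{iu}f)\le C(1+H)Mf$; for the tail term you need the annulus-by-annulus version $\sum_{k\ge1}\min\bigl(A_0,\,A_1 2^{-k}\bigr)\le CA_0\log(e+|u|)$, which your gradient bound does supply but which you should carry out explicitly, since the same crossover argument is being reused there. Second, the Fefferman--Stein step $\|g\|_{p,w}\le C\|M^{\#}_{\eta}g\|_{p,w}$ for $w\in A_p$ requires the usual a~priori finiteness (work with $f\in S(\mathbb{R}^n)$ and a limiting argument). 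With those details filled in, your proof is complete and is, as far as the structure goes, the same kind of argument that \cite{G} and \cite{CM} run; an alternative route the authors could equally have cited is to prove the weighted $L^2$ bound and extrapolate, which is consistent with the Rubio de Francia machinery they use elsewhere in the paper.
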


Our extension of this theorem in the variable Lebesgue spaces setting is the following 

\begin{thm}\label{th2.2}
Let be $p(\cdot)\in\B(\R^n)$.  Then for all $\delta\in (0,1)$ there exists a constant $C$ such that, for all
$u\in\mathbb{R}\backslash\{0\}$
\begin{equation} \label{2.3}
\|I_{iu}f\|_{p(\cdot)}\leq C(1+|u|)^{n/2+\delta}\|f\|_{p(\cdot)},
\,\,\,f\in L^{p(\cdot)}(\mathbb{R}^{n}).
\end{equation}
\end{thm}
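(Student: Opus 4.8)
The plan is to deduce the variable-exponent bound \eqref{2.3} from the weighted estimate \eqref{2.2} of Theorem \ref{th2.1} by means of the Rubio de Francia extrapolation theorem in its variable Lebesgue space formulation (see \cite{CUF, DHHR}). The essential observation is that in \eqref{2.2} the dependence on the parameter $u$ is confined to the scalar factor $(1+|u|)^{n/2+\delta}$, while the remaining constant $C$ depends only on $p$ and on the $A_p$ constant of $w$ but not on $u$; the whole argument therefore reduces to checking that the extrapolation procedure preserves this factored form. Since $I_{iu}$ is linear, I would apply extrapolation to the family of pairs $(|I_{iu}f|,|f|)$ with $f$ ranging over $S(\R^n)$.

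First I would fix $\delta\in(0,1)$, regard $u\in\R\setminus\{0\}$ as fixed, and choose a base exponent $p_0\in(1,\infty)$ (to be specified). For every $w\in A_{p_0}$, Theorem \ref{th2.1} gives
\begin{equation*}
\|I_{iu}f\|_{p_0,w}\le C_0\,(1+|u|)^{n/2+\delta}\,\|f\|_{p_0,w},
\end{equation*}
where $C_0$ depends on $p_0$ and on the $A_{p_0}$ constant of $w$, and crucially \emph{not} on $u$. This is exactly the weighted hypothesis required to run the extrapolation theorem with base exponent $p_0$.

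Next I would verify that, given $p(\cdot)\in\B(\R^n)$, the exponent $p_0$ can be chosen so that the extrapolation theorem applies with target exponent $p(\cdot)$. Since $M$ is bounded on $L^{p(\cdot)}$ one has $p^->1$, and by Diening's duality theorem $M$ is bounded on $L^{p'(\cdot)}$ as well. Using the openness (self-improving) property of the class $\B(\R^n)$, one then finds $p_0\in(1,p^-)$ for which $(p(\cdot)/p_0)'\in\B(\R^n)$. This is precisely the condition that makes the Rubio de Francia iteration algorithm, built from $M$ acting on $L^{(p(\cdot)/p_0)'}$, produce an $A_{p_0}$ weight whose $A_{p_0}$ constant is controlled by the norm of $M$ on $L^{(p(\cdot)/p_0)'}$, a quantity that is independent of $u$ and of $f$.

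With these ingredients the extrapolation theorem yields $\|I_{iu}f\|_{p(\cdot)}\le C\|f\|_{p(\cdot)}$, and the point that requires genuine care is the bookkeeping of the $u$-dependence. Because the weighted constant enters the conclusion of the extrapolation theorem linearly, and because the $A_{p_0}$ constant of the weight supplied by the iteration algorithm is bounded uniformly in $u$, the resulting constant is a $u$-independent multiple of $C_0\,(1+|u|)^{n/2+\delta}$. This reproduces the growth factor $(1+|u|)^{n/2+\delta}$ in \eqref{2.3}, which is exactly the form needed for the subsequent Mellin transform argument. I expect this uniform control of the extrapolation constant in $u$ to be the main (and essentially the only) obstacle, the remainder being a direct invocation of the variable-exponent extrapolation machinery.
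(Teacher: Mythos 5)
Your proposal is correct and follows essentially the same route as the paper: extrapolation (Theorem \ref{th2.3}) from Gunawan's weighted estimate \eqref{2.2}, using the left-openness of $\B(\R^n)$ to produce $p_1\in(1,p^-)$ with $(p(\cdot)/p_1)'\in\B(\R^n)$. Your explicit bookkeeping of the $u$-dependence (which could equivalently be packaged by applying extrapolation to the pairs $\bigl((1+|u|)^{-n/2-\delta}|I_{iu}f|,\;|f|\bigr)$ with $u$ ranging over $\R\setminus\{0\}$, so that the weighted constant is uniform) is a point the paper leaves implicit, but it is handled correctly.
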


To  prove this Theorem we need the extrapolation theorem for variable Lebesgue spaces. 
By $\mathcal{F}$  we will denote a family of ordered pairs of
non-negative, measurable functions $(f,g).$ We say that an
inequality
\begin{equation} \label{2.4}
\int_{\mathbb{R}^{n}}f(x)^{p_{0}}w(x)dx\leq C
\int_{\mathbb{R}^{n}}g(x)^{p_{0}}w(x)dx,\,\,\,(0<p_{0}<\infty)
\end{equation}
holds  for any $(f,g)\in\mathcal{F}$ and $w\in A_{q}$ (for some
$q,\,1<q<\infty$) if it holds for any pair in $\mathcal{F}$ such
that the left-hand side is finite, and the constant $C$ depends only
on $p_{0}$ and on the constant $A_{q,w}.$

\begin {thm}(\cite[Theorem~7.2.1, page 214]{DHHR}, see also \cite[Theorem~4.25, page 87]{CUMP}, \cite[Chap.~5]{CUF}). \label{th2.3}
Given a family $\mathcal{F}$, assume that \eqref{2.4} holds for some
$1<p_{0}<\infty,$ for every weight $w\in A_{p_{0}}$ and
 for all $(f,g)\in \mathcal{F}$. Let exponent
$p(\cdot)$ be such that there
 exists $1<p_{1}<p_{-},$ with $(p(\cdot)/p_{1})'\in \mathcal{B}(\mathbb{R}^{n}).$
Then
$$
\|f\|_{p(\cdot)}\leq C\|g\|_{p(\cdot)}
$$
for all $(f,g)\in\mathcal{F}$ such that $f\in
L^{p(\cdot)}(\mathbb{R}^{n}).$
\end{thm}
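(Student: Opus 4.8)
The final statement, Theorem~\ref{th2.3}, is the Rubio de Francia extrapolation theorem transplanted to variable Lebesgue spaces, so my plan is to combine the classical (scalar) extrapolation theorem with the Rubio de Francia iteration algorithm built over the dual variable space. The one genuine subtlety is that the exponent $p_0$ appearing in the weighted hypothesis \eqref{2.4} is \emph{decoupled} from the exponent $p_1$ that the variable-norm scaling will force upon me; I would dispose of this mismatch at the very start.

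First I would invoke the classical strong-type Rubio de Francia extrapolation theorem: since \eqref{2.4} holds at the single exponent $p_0$ for every $w\in A_{p_0}$, with constant depending only on $p_0$ and $A_{p_0,w}$, it holds at \emph{every} exponent $q\in(1,\infty)$ for every $w\in A_q$, with constant depending only on $q$ and $A_{q,w}$ (and with the usual caveat that the left-hand side be finite). In particular I obtain, for all $(f,g)\in\mathcal F$,
$$\int_{\R^n} f(x)^{p_1}w(x)\,dx \le C\int_{\R^n} g(x)^{p_1}w(x)\,dx, \qquad w\in A_{p_1}.$$
This is the step that removes the gap between $p_0$ and $p_1$, and it explains why the hypothesis needs only a \emph{single} exponent.

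Next I would fix $(f,g)\in\mathcal F$ with $f\in L^{p(\cdot)}(\R^n)$ and reduce the variable-norm inequality to a duality pairing. Using the scaling identity $\|f\|_{p(\cdot)}^{p_1}=\big\||f|^{p_1}\big\|_{p(\cdot)/p_1}$ — legitimate because $p_1<p_-$ forces $p(\cdot)/p_1$ to be a variable exponent bounded below by $p_-/p_1>1$ — together with the norm-conjugate (associate space) formula for $L^{p(\cdot)/p_1}$, I would write $\|f\|_{p(\cdot)}^{p_1}\le 2\sup\int_{\R^n} f^{p_1}h\,dx$, the supremum taken over $h\ge0$ with $\|h\|_{(p(\cdot)/p_1)'}\le1$. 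The assumption $(p(\cdot)/p_1)'\in\mathcal B(\R^n)$ means $M$ is bounded on $L^{(p(\cdot)/p_1)'}$; over this space I run the Rubio de Francia algorithm $\mathcal Rh=\sum_{k=0}^{\infty}\frac{M^kh}{2^k\|M\|^k}$, where $\|M\|$ is the operator norm of $M$ on $L^{(p(\cdot)/p_1)'}$. This produces a majorant $\mathcal Rh\ge h$ satisfying $\|\mathcal Rh\|_{(p(\cdot)/p_1)'}\le 2\|h\|_{(p(\cdot)/p_1)'}$ and, crucially, lying in $A_1$, hence in $A_{p_1}$, with constant controlled by $\|M\|$.

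With these ingredients the loop closes: replacing $h$ by the majorant $\mathcal Rh$, applying the $A_{p_1}$-weighted inequality of the first step with $w=\mathcal Rh$, and finishing with H\"older's inequality in variable Lebesgue spaces,
$$\int_{\R^n} f^{p_1}h\,dx \le \int_{\R^n} f^{p_1}\mathcal Rh\,dx \le C\int_{\R^n} g^{p_1}\mathcal Rh\,dx \le C'\,\|g\|_{p(\cdot)}^{p_1}\,\|h\|_{(p(\cdot)/p_1)'},$$
and taking the supremum over admissible $h$ yields $\|f\|_{p(\cdot)}\le C\|g\|_{p(\cdot)}$. I expect the main obstacle to be bookkeeping the finiteness conditions without circularity: the weighted inequality of the first step is only usable when its left side is finite, so before applying it I must check $\int_{\R^n} f^{p_1}\mathcal Rh\,dx<\infty$, which follows from H\"older and the standing assumption $f\in L^{p(\cdot)}(\R^n)$ — \emph{not} from the conclusion being proved. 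This is precisely why the membership $f\in L^{p(\cdot)}(\R^n)$ is required in the statement, and keeping that implication one-directional is the delicate point of the argument.
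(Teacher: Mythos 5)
The paper does not actually prove Theorem~\ref{th2.3}; it is imported with citations, and your argument correctly reconstructs the standard proof from those sources (\cite[Theorem~7.2.1]{DHHR}, \cite[Theorem~4.25]{CUMP}): classical Rubio de Francia extrapolation to pass from $p_0$ with $A_{p_0}$ weights to $p_1$ with $A_1\subset A_{p_1}$ weights, the rescaling $\|f\|_{p(\cdot)}^{p_1}=\bigl\||f|^{p_1}\bigr\|_{p(\cdot)/p_1}$ with duality, and the iteration algorithm $\mathcal{R}$ on $L^{(p(\cdot)/p_1)'}$, whose required boundedness of $M$ is exactly the hypothesis $(p(\cdot)/p_1)'\in\B(\R^n)$. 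Your treatment of the finiteness caveat, verifying $\int_{\R^n} f^{p_1}\mathcal{R}h\,dx<\infty$ via H\"older from $f\in L^{p(\cdot)}(\R^n)$ before invoking the weighted inequality, is precisely where that membership hypothesis enters in the cited proofs as well, so the proposal is complete and faithful to the standard route.
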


{\it Proof of the Theorem~\ref{th2.2}.}
Using Theorem~\ref{th2.3},  estimate \eqref{2.2} and the fact that if
$p(\cdot)\in\mathcal{B}(\mathbb{R}^{n})$ then $(p(\cdot)/p_{1})'\in
\mathcal{B}(\mathbb{R}^{n})$ for some $1<p_{1}<p_{-},$ (see \cite[Theorem~5.7.2, page 181]{DHHR}) we
obtain \eqref{2.3}.
$\square$

\begin{cor} \label{c2.4}
 Let $\frac{1}{p(\cdot)}=\frac{1-\theta}{2}+\frac{\theta}{\widetilde p(\cdot)}$ for some $0<\theta<1$ and
$\widetilde{p}(\cdot)\in\mathcal{B}(\mathbb{R}^{n})$.  
Then for all $0<\delta<1$ there exists a constant $C$
such that, for all $u\in\mathbb{R}\backslash\{0\}$
\begin{equation} \label{2.5}
\|I_{iu}f\|_{p(\cdot)}\leq C(1+|u|)^{\theta
n/2+\theta\delta}\|f\|_{p(\cdot)}, \,\,\,f\in
L^{p(\cdot)}(\mathbb{R}^{n}).
\end{equation}
\end{cor}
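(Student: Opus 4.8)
The plan is to read the exponent relation $\frac{1}{p(\cdot)}=\frac{1-\theta}{2}+\frac{\theta}{\widetilde p(\cdot)}$ as the statement that $L^{p(\cdot)}(\R^n)$ is the complex interpolation space, with parameter $\theta$, of the couple $\bigl(L^2(\R^n),L^{\widetilde p(\cdot)}(\R^n)\bigr)$, and then to interpolate the two endpoint bounds that are already available for the single linear operator $I_{iu}$. This way no new analysis of the kernel $K_{iu}$ is needed: everything reduces to estimates \eqref{2.1} and \eqref{2.3}.

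First I would record the two endpoint estimates. On $L^2(\R^n)$, relation \eqref{2.1} says that $I_{iu}$ is an isometry, so its operator norm is $M_0=1$, independently of $u$. On $L^{\widetilde p(\cdot)}(\R^n)$, since $\widetilde p(\cdot)\in\B(\R^n)$ by hypothesis, Theorem~\ref{th2.2} applies and yields $\|I_{iu}f\|_{\widetilde p(\cdot)}\le M_1\|f\|_{\widetilde p(\cdot)}$ with $M_1=C(1+|u|)^{n/2+\delta}$ for every fixed $\delta\in(0,1)$. The reason for taking $\widetilde p(\cdot)$ (rather than $p(\cdot)$) as the second endpoint is precisely that membership in $\B(\R^n)$ is the hypothesis under which Theorem~\ref{th2.2} is available.

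Next I would invoke the complex interpolation theorem for variable Lebesgue spaces (see \cite[Chap.~7]{DHHR}), which identifies $[L^2(\R^n),L^{\widetilde p(\cdot)}(\R^n)]_\theta$ with $L^{p(\cdot)}(\R^n)$, with norms equivalent up to a constant depending only on $\theta$ and the exponents, and crucially not on $u$. Applying the interpolation bound for a single linear operator bounded on both endpoints gives
\[
\|I_{iu}\|_{L^{p(\cdot)}\to L^{p(\cdot)}}\le C\,M_0^{1-\theta}M_1^{\theta}=C\bigl(C(1+|u|)^{n/2+\delta}\bigr)^{\theta}.
\]
Since $M_0=1$, the right-hand side simplifies to a constant times $(1+|u|)^{\theta(n/2+\delta)}=(1+|u|)^{\theta n/2+\theta\delta}$, which is exactly \eqref{2.5} after absorbing the numerical factors into $C$.

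I expect the only genuine obstacle to be the identification of $[L^2(\R^n),L^{\widetilde p(\cdot)}(\R^n)]_\theta$ with $L^{p(\cdot)}(\R^n)$ and, more importantly, the verification that the equivalence constant in that identification is independent of $u$; once this is secured, the entire $u$-dependence is carried by the factor $M_0^{1-\theta}M_1^{\theta}$ and the conclusion \eqref{2.5} is immediate. The structural hypotheses needed for the interpolation identity are mild and are met here, since one endpoint is the constant exponent $2$ and $1/p(\cdot)$ is a genuine convex combination of $1/2$ and $1/\widetilde p(\cdot)$. As an alternative to invoking the abstract identification, one could apply Stein's interpolation theorem to the analytic family $z\mapsto I_{iz}$ directly on the variable-exponent scale, but the two-space complex interpolation above is the shortest route.
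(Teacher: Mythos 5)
Your proposal is correct and follows essentially the same route as the paper: the authors also identify $L^{p(\cdot)}(\R^n)$ with $[L^{2}(\R^n),L^{\widetilde p(\cdot)}(\R^n)]_{\theta}$ via the complex interpolation theorem for variable exponent Lebesgue spaces and then combine the Plancherel isometry \eqref{2.1} on $L^{2}$ with the bound of Theorem~\ref{th2.2} on $L^{\widetilde p(\cdot)}$ through the estimate $\|I_{iu}\|_{L^{p(\cdot)}\to L^{p(\cdot)}}\le \|I_{iu}\|_{L^{2}\to L^{2}}^{1-\theta}\,\|I_{iu}\|_{L^{\widetilde p(\cdot)}\to L^{\widetilde p(\cdot)}}^{\theta}$. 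Your additional remark that the equivalence constant in the interpolation identity is independent of $u$ is a point the paper leaves implicit, but it holds since that constant depends only on the spaces and $\theta$.
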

\begin{proof} By using the complex interpolation theorem for variable exponent  Lebesgue spaces (see \cite[Theorem.1.2, page 215]{DHHR} ), we have
$L^{p(\cdot)}(\mathbb{R}^{n}=[L^{2}(\mathbb{R}^{n},\,\,L^{\widetilde{p}(\cdot)}(\mathbb{R}^{n}]_{\theta}$.
Therefore, $p(\cdot)\in \B(\R^n)$ and  
$$
\|I_{iu}\|_{L^{p(\cdot)}\rightarrow
L^{p(\cdot)}}\leq\|I_{u}\|_{L^{2}\rightarrow
L^{2}}^{1-\theta}\|I_{u}\|_{L^{\widetilde{p}(\cdot)}\rightarrow
L^{\widetilde{p}(\cdot)}}\leq C(1+|u|)^{\theta n/2+\theta\delta}.
$$
\end{proof}

\section{The spherical maximal function}

For $\alpha>0,$  let
$m_{\alpha}(x)=(1-|x|^{2})^{\alpha-1}/\Gamma(\alpha),$ where
$|x|<1,$ and $m_{\alpha}(x)=0$ if $|x|\geq1.$  With
$m_{\alpha,t}(x)=m_{\alpha}(x/t)t^{-n},\,\,t>0,$  we define
spherical means of (complex) order ${\mathrm Re}\,\alpha>0,$  by
$$
\mathcal{M}_{t}^{\alpha}f(x)=(m_{\alpha,t}\ast f)(x).
$$

Note that the Fourier transform of $m_{\alpha}$ is given by
$$
\widehat{m}_{\alpha}(\xi)=\pi^{-\alpha+1}|\xi|^{-n/2-\alpha+1}J_{n/2+\alpha-1}(2\pi|\xi|).
$$

 The  definition  of $\mathcal{M}_{t}^{\alpha}$   can be extended to the region
${\mathrm Re}\,\alpha\leq0$  by the analytic continuation. Indeed  for complex
$\alpha$ in general we can  define the operator
$\mathcal{M}_{t}^{\alpha}$ by
$$
(\mathcal{M}_{t}^{\alpha}f)^{\wedge}(\xi)=\widehat{m}_{\alpha}(t\xi)\widehat{f}(\xi),
\,\,\,f\in S(\mathbb{R}^{n}).
$$

The spherical maximal operator of order $\alpha$ is defined by
\begin{equation} \label{sphmaxop}
\mathcal{M}^{\alpha}f(x)=\sup_{t>0}|M_{t}^{\alpha}f(x)|.
\end{equation}

Note that for  $\alpha=0$ we have
$\mathcal{M}^{\alpha}f(x)=c\mathcal{M}f(x)$ for an appropriate constant $c.$

\begin{thm}[E.M.~Stein \cite{S1}]
The inequality $\|\mathcal{M}^{\alpha}f\|_{p}\leq
A_{p,\alpha}\|f\|_{p}$ holds in the following circumstances:

(a)  if $1<p\leq2,$ when $\alpha>1-n+n/p,$

(b) if  $2\leq p\leq\infty,$ when $\alpha>(1/p)(2-n).$
\end{thm}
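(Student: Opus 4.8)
The plan is to follow the Mellin-transform scheme of Cowling and Mauceri, reducing $\mathcal{M}^\alpha$ to a superposition of the imaginary powers $I_{iu}$, whose norms are already controlled by Theorem~\ref{th2.1}. First I would exploit that the multiplier of $\mathcal{M}_t^\alpha$ is radial and a function of $r=t|\xi|$ alone, writing $\widehat{m}_\alpha(t\xi)=\Phi(t|\xi|)$ with $\Phi(r)=\pi^{-\alpha+1}r^{1-n/2-\alpha}J_{n/2+\alpha-1}(2\pi r)$. Mellin inversion on a vertical line $\mathrm{Re}\,s=c$ gives $\Phi(r)=\frac{1}{2\pi}\int_{-\infty}^{\infty}\tilde\Phi(c+iu)\,r^{-c-iu}\,du$, where $\tilde\Phi$ is the Mellin transform of $\Phi$. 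Since $|\xi|^{-iu}\widehat f$ equals, up to the unimodular factor $(2\pi)^{iu}$, the Fourier transform of $I_{iu}f$, substituting $r=t|\xi|$ expresses $\mathcal{M}_t^\alpha f$ as an average of the operators $I_{iu}f$ against the weight $\tilde\Phi(iu)$ and unimodular factors $t^{-iu}$. Taking the supremum in $t$ under the integral—legitimate because $|t^{-iu}(2\pi)^{iu}|=1$—yields the pointwise domination $\mathcal{M}^\alpha f(x)\le \frac{1}{2\pi}\int_{-\infty}^{\infty}|\tilde\Phi(iu)|\,|I_{iu}f(x)|\,du$ for the high-frequency part.

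Next I would take $L^p$ norms, pass them inside by Minkowski's integral inequality, and insert the imaginary-power bounds. At $p=2$ the $I_{iu}$ are isometries by \eqref{2.1}, so boundedness reduces to $\int_{-\infty}^{\infty}|\tilde\Phi(iu)|\,du<\infty$; for $p\ne2$ the polynomial bound of Theorem~\ref{th2.1} (or its $\theta$-scaled interpolated form in Corollary~\ref{c2.4}) inserts a factor $(1+|u|)^{\gamma}$ into the integrand. Everything then depends on the decay of $\tilde\Phi(iu)$: from the classical formula for the Mellin transform of $r^{-\mu}J_\nu(2\pi r)$ in terms of Gamma functions, together with Stirling's formula, the exponential factors cancel and one finds $|\tilde\Phi(c+iu)|\sim |u|^{c-n/2-\alpha}$ as $|u|\to\infty$. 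Hence the $u$-integral converges exactly when $n/2+\alpha-\gamma>1$. At $p=2$, where $\gamma=0$, this is precisely $\alpha>(2-n)/2$, the threshold in the statement; and using Corollary~\ref{c2.4} with $\widetilde p$ chosen near $1$ (so that $\theta\to 2/p-1$) turns the condition into $\alpha>1-n+n/p$, matching region~(a).

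For $2\le p\le\infty$ (region~(b)) the imaginary-power bound is no longer efficient, and I would instead use interpolation against the trivial endpoint. For $\mathrm{Re}\,\alpha>0$ the kernel $m_\alpha$ lies in $L^1$ uniformly in $t$, so $\mathcal{M}^\alpha$ is bounded on $L^\infty$ there. Viewing $\{\mathcal{M}^\alpha\}$ as an admissible analytic family and applying Stein's interpolation theorem between the $L^2$ estimate (valid for $\mathrm{Re}\,\alpha>(2-n)/2$) and this $L^\infty$ estimate (valid for $\mathrm{Re}\,\alpha>0$) gives boundedness along the segment $\alpha=(2-n)/p$, $2\le p\le\infty$, which is exactly region~(b).

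The main obstacle I anticipate is the behaviour near $r=0$, that is, the low-frequency (large-$t$) contribution. Because $\Phi(r)$ tends to a nonzero constant as $r\to0$, the Mellin integrand carries the pole of $\Gamma(s/2)$ at $s=0$, so the contour cannot be placed on the imaginary axis and the naive $u$-integral has a non-integrable $|u|^{-1}$ singularity at the origin. I would resolve this by splitting $\Phi$ into a smooth piece supported near $r=0$ and an oscillatory tail: the tail is treated exactly as above, while the low-frequency piece is dominated pointwise by the Hardy-Littlewood maximal function and is therefore harmless on every $L^p$, $1<p\le\infty$. Establishing the sharp decay of $\tilde\Phi$—equivalently, extracting the full gain from the oscillation of the Bessel function—and checking that the exponent bookkeeping returns the stated thresholds is the delicate step; once that decay is secured, the interpolation and the low-frequency estimate are routine.
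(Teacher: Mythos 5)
This theorem is quoted from Stein \cite{S1}; the paper contains no proof of it, so the only meaningful comparison is with the proof of Theorem~\ref{th3.2}, which is the paper's variable-exponent analogue and uses exactly the Mellin-transform scheme you describe. Your treatment of region (a) is essentially that argument specialized to constant exponents: remove the value of the multiplier at the origin (the paper subtracts $F_\alpha(0)e^{-\lambda^2}$ and controls that piece by the Poisson kernel, where you use a smooth cutoff and the Hardy--Littlewood maximal function --- both work), expand the remainder over $\lambda^{iu}$, dominate pointwise by $\int_{\R}|A_\alpha(u)|\,|I_{iu}f|\,du$, and balance the decay $|A_\alpha(u)|=O((1+|u|)^{-\alpha-n/2})$ against the growth of $\|I_{iu}\|_{L^p\to L^p}$; your limit $\theta\to 2/p-1$ in Corollary~\ref{c2.4} correctly reproduces the threshold $\alpha>1-n+n/p$. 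Region (b) is where you genuinely depart from anything in the paper: at fixed $\alpha$ the paper interpolates the \emph{spaces} (the pair $[L^{\infty},L^{\overline{p}(\cdot)}]_{\theta}$ in Theorem~\ref{th3.4}), whereas you interpolate in the complex parameter $\alpha$ itself, via Stein's analytic-family theorem between the $L^2$ bound (valid for $\mathrm{Re}\,\alpha>(2-n)/2$) and the trivial $L^\infty$ bound (valid for $\mathrm{Re}\,\alpha>0$). That is Stein's original route and is the only way to reach all of region (b); the paper's fixed-$\alpha$ interpolation yields only the weaker range of Theorem~\ref{th3.4}. Two standard but non-optional points should be made explicit before the argument is complete: $\mathcal{M}^{\alpha}$ is sublinear, so analytic interpolation must be applied to the linearizations $f\mapsto \mathcal{M}^{\alpha(z)}_{t(x)}f(x)$ with bounds uniform in the measurable stopping function $t(\cdot)$; and both endpoint estimates must be shown to hold with constants of admissible (at most exponential) growth in $\mathrm{Im}\,\alpha$, which requires tracking the $\Gamma$-factors in $A_\alpha(u)$ via Stirling's formula rather than only their size at real $\alpha$.
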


Note that for $\alpha$ in Steins's theorem we have the restriction
$1-n/2<\alpha<1$ and, moreover, we have

 a) if  $\alpha=0,$  then $n\geq 3,\,\,p>\frac{n}{n-1},$

 b) If   $0<\alpha<1$  then $n\geq2,\,\frac{n}{n-1+\alpha}<p\leq\infty,$

 c) If   $1-n/2<\alpha<0$ then $n\geq3,\,\,\frac{n}{n-1+\alpha}<p<\frac{2-n}{\alpha}.$

 In this section we study boundedness properties of the
 Stein's spherical maximal operator  $\M^\alpha$ on the variable exponent Lebesgue
spaces.  Our main result is following

\begin{thm} \label{th3.2}
Let $1-n/2<\alpha <1$ and let $\widetilde{p}(\cdot):=\frac{2p(\cdot)\theta}{2-(1-\theta)p(\cdot)} \in\mathcal{B}(\mathbb{R}^{n})$ for some 
$0<\theta <1-\frac{2}{n}+\frac{2}{n}\alpha$.
Then the spherical maximal operator $\mathcal{M}^{\alpha}$ is bounded on
$L^{p(\cdot)}(\mathbb{R}^{n}).$
\end{thm}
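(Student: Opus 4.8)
The plan is to implement the Mellin transform method of Cowling and Mauceri, which linearizes the supremum in $t$ as a superposition of the imaginary powers $I_{is}=(-\Delta)^{-is/2}$ and then applies Corollary~\ref{c2.4}. Since the multiplier of $\mathcal{M}_{t}^{\alpha}$ is radial, I would write $\widehat{m}_\alpha(\xi)=\Phi_\alpha(|\xi|)$ with $\Phi_\alpha(r)=\pi^{-\alpha+1}r^{-n/2-\alpha+1}J_{n/2+\alpha-1}(2\pi r)$. The one genuine obstruction is that $\Phi_\alpha(0)=\pi^{n/2}/\Gamma(n/2+\alpha)\ne 0$: this ``DC component'' forces the Mellin transform of $\Phi_\alpha$ to have a simple pole at the origin of the integration line, so a naive representation on the imaginary axis is not integrable near $s=0$. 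I would therefore fix a smooth cut-off $\psi$ with $\psi\equiv 1$ near $0$ and $\operatorname{supp}\psi\subset[0,2]$, split $\Phi_\alpha=\Phi_\alpha\psi+\Phi_\alpha(1-\psi)=:\Phi_\alpha^{0}+\Phi_\alpha^{\infty}$, and correspondingly $\mathcal{M}_{t}^{\alpha}=\mathcal{M}_{t}^{\alpha,0}+\mathcal{M}_{t}^{\alpha,\infty}$, so that pointwise $\M^{\alpha}f\le \M^{\alpha,0}f+\M^{\alpha,\infty}f$.

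For the low-frequency piece, $\Phi_\alpha^{0}(|\xi|)$ is smooth and compactly supported in $\xi$, so its inverse Fourier transform is a Schwartz kernel $K^0$; the dilated convolutions $\mathcal{M}_{t}^{\alpha,0}f=t^{-n}K^0(\cdot/t)\ast f$ admit a radially decreasing integrable majorant, whence the standard pointwise domination $\M^{\alpha,0}f\le C\,Mf$. Since the hypothesis forces $p(\cdot)\in\B(\R^n)$ (as recorded in Corollary~\ref{c2.4}), the Hardy--Littlewood maximal operator $M$ is bounded on $L^{p(\cdot)}$, and this piece is controlled. For the high-frequency piece, $\Phi_\alpha^{\infty}$ vanishes near $0$, so its Mellin transform $\widetilde{\Phi}_\alpha^{\infty}(s)=\int_0^\infty \Phi_\alpha^{\infty}(r)r^{is-1}\,dr$ is regular on the imaginary axis and the inversion $\Phi_\alpha^{\infty}(r)=\frac{1}{2\pi}\int_{-\infty}^\infty\widetilde{\Phi}_\alpha^{\infty}(s)r^{-is}\,ds$ is legitimate. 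Substituting $r=t|\xi|$ and using $|\xi|^{-is}=(2\pi)^{is}(2\pi|\xi|)^{-is}$ gives, on the Fourier side,
\[
\mathcal{M}_{t}^{\alpha,\infty}f=\frac{1}{2\pi}\int_{-\infty}^\infty \widetilde{\Phi}_\alpha^{\infty}(s)\,(2\pi)^{is}\,t^{-is}\,I_{is}f\,ds .
\]
The decisive feature is that the entire $t$-dependence sits in the unimodular factor $t^{-is}$; taking absolute values and then the supremum over $t>0$ kills it,
\[
\M^{\alpha,\infty}f(x)\le \frac{1}{2\pi}\int_{-\infty}^\infty \abs{\widetilde{\Phi}_\alpha^{\infty}(s)}\,\abs{I_{is}f(x)}\,ds .
\]

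Taking $L^{p(\cdot)}$ norms, moving the norm inside the integral, and invoking Corollary~\ref{c2.4} (applicable because the defining relation $\widetilde{p}(\cdot)=\tfrac{2p(\cdot)\theta}{2-(1-\theta)p(\cdot)}$ is equivalent to $\tfrac{1}{p(\cdot)}=\tfrac{1-\theta}{2}+\tfrac{\theta}{\widetilde{p}(\cdot)}$ and $\widetilde{p}(\cdot)\in\B(\R^n)$ by hypothesis), one gets $\norm{I_{is}f}_{p(\cdot)}\le C(1+\abs{s})^{\theta n/2+\theta\delta}\norm{f}_{p(\cdot)}$ for each $\delta\in(0,1)$, and the whole matter reduces to the scalar bound $\int_{-\infty}^\infty\abs{\widetilde{\Phi}_\alpha^{\infty}(s)}(1+\abs{s})^{\theta n/2+\theta\delta}\,ds<\infty$. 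The heart of the argument, and the step I expect to be hardest, is the decay estimate for $\widetilde{\Phi}_\alpha^{\infty}$. Because $\Phi_\alpha^{\infty}$ differs from $\Phi_\alpha$ only on a compact set where $\Phi_\alpha$ is smooth, the decay in $s$ is governed by the large-$r$ behaviour and can be read off from the classical Mellin transform of the Bessel function $\int_0^\infty r^{\sigma-1}J_\nu(r)\,dr=2^{\sigma-1}\Gamma\!\big(\tfrac{\nu+\sigma}{2}\big)/\Gamma\!\big(\tfrac{\nu-\sigma}{2}+1\big)$: with $\nu=n/2+\alpha-1$ and $\sigma=is-n/2-\alpha+1$ the two Gamma arguments collapse to $\tfrac{is}{2}$ and $\tfrac{n+2\alpha}{2}-\tfrac{is}{2}$, so the transform is a constant multiple of $\Gamma(is/2)/\Gamma(\tfrac{n+2\alpha}{2}-\tfrac{is}{2})$. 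Applying Stirling's asymptotics $\abs{\Gamma(a+ib)}\sim\sqrt{2\pi}\,\abs{b}^{a-1/2}e^{-\pi\abs{b}/2}$, the exponential factors $e^{-\pi\abs{s}/4}$ from numerator and denominator cancel and leave the polynomial decay $\abs{\widetilde{\Phi}_\alpha^{\infty}(s)}\le C(1+\abs{s})^{-n/2-\alpha}$ (the same rate also follows by stationary phase from $J_\nu(r)=O(r^{-1/2})$). The delicate points here are the boundary position of the Mellin contour, the clean separation of the pole at $s=0$ into the low-frequency term, and the uniform control of the Stirling remainder on the critical line.

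Finally I would close the loop by matching exponents: convergence of the scalar integral requires $-\tfrac{n}{2}-\alpha+\tfrac{\theta n}{2}+\theta\delta<-1$, that is $\tfrac{\theta n}{2}+\theta\delta<\tfrac{n}{2}+\alpha-1$, and the hypothesis $0<\theta<1-\tfrac{2}{n}+\tfrac{2}{n}\alpha$ is exactly $\tfrac{\theta n}{2}<\tfrac{n}{2}+\alpha-1$ (multiply by $n/2$). Hence there is a strictly positive gap, and since $\theta$ is fixed while $\delta\in(0,1)$ is free, choosing $\delta$ small enough absorbs the term $\theta\delta$ and secures convergence. Combining the two pieces then gives $\norm{\M^{\alpha}f}_{p(\cdot)}\le\norm{\M^{\alpha,0}f}_{p(\cdot)}+\norm{\M^{\alpha,\infty}f}_{p(\cdot)}\le C\norm{Mf}_{p(\cdot)}+C\norm{f}_{p(\cdot)}\le C\norm{f}_{p(\cdot)}$, which is the asserted boundedness.
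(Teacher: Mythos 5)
Your proposal is correct and follows the same Cowling--Mauceri architecture as the paper: represent the multiplier of $\mathcal{M}_t^{\alpha}$ as a Mellin superposition of the imaginary powers $I_{iu}$, apply Corollary~\ref{c2.4} under the integral via Minkowski's inequality, and check that the decay $O((1+|u|)^{-n/2-\alpha})$ of the Mellin coefficients beats the growth $(1+|u|)^{\theta n/2+\theta\delta}$ precisely because $\theta<1-\tfrac2n+\tfrac2n\alpha$; your exponent bookkeeping at the end is identical to the paper's. You diverge in two secondary places. First, to remove the obstruction $F_\alpha(0)\neq 0$ the paper subtracts a Gaussian, setting $F_\alpha^{*}(\lambda)=F_\alpha(\lambda)-F_\alpha(0)e^{-\lambda^{2}}$, and controls the subtracted piece by the Poisson kernel and hence by $Mf$; you instead use a smooth compactly supported frequency cutoff and control the low-frequency piece by a radially decreasing integrable majorant, again landing on $Mf$. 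The two devices are interchangeable here, though the Gaussian subtraction has the small advantage that the remaining function $F_\alpha^{*}$ is globally defined by one formula and its Mellin coefficients $A_\alpha(u)$ admit the closed Gamma-function expression the paper quotes from \cite{GB}, from which the decay is immediate; your route has to argue that the cutoff correction contributes a rapidly decaying term (integration by parts) and that the Bessel--Mellin formula is legitimate on the critical line $\mathrm{Re}\,\sigma=-\nu$, which you correctly flag as the delicate point but do not fully discharge. Second, you derive the decay rate from Stirling's asymptotics rather than citing the explicit formula; both give the same exponent $-n/2-\alpha$. Net effect: same theorem, same quantitative mechanism, a self-contained but slightly more labor-intensive treatment of the zero-frequency singularity and of the coefficient decay.
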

\begin{proof}
Let
$F_{\alpha}(\lambda)=\pi^{-\alpha+1}\lambda^{-n/2-\alpha+1}J_{n/2+\alpha-1}(2\pi\lambda),\,\,\,\lambda>0,$
where $J_{\nu}$ denotes the Bessel function of order $\nu$.

Then we have
$$
(M_{t}^{\alpha}f)^{\wedge}(\xi)=F_{\alpha}(t|\xi|)\widehat{f}(\xi)
$$

Let
$F^{\ast}_{\alpha}(\lambda)=F_{\alpha}(\lambda)-F_{\alpha}(0)e^{-\lambda^{2}}$,
so that $F^{\ast}_{\alpha}(0)=0.$ Using the Mellin transform we have
$$
F^{\star}_{\alpha}(\lambda)=\int_{\mathbb{R}}A_{\alpha}(u)\lambda^{iu}du,\,\,\,\lambda>0,
$$
that is, $F^{\ast}_{\alpha}$ is the Mellin transform  of $A_\alpha(u)$ (for the Mellin transform see \cite{F}). By the Fourier Inversion Theorem, this holds if and only if
\[
A_{\alpha}(u)=\frac{1}{2\pi}\int_{0}^{\infty}F^{\star}_{\alpha}(\lambda)\lambda^{-1-iu}d\lambda,\,\,u\in\mathbb{R}.
\]
 For $f\in \mathcal{S}(R^{n})$ we have
\begin{align*}
m_{t}^{\alpha}\ast
f(x)&=(\widehat{m_{t}^{\alpha}}(\xi)\cdot\widehat{f}(\xi))^{\vee}(x)=(F_{\alpha}(t|\xi|)\cdot\widehat{f}(\xi))^{\vee}(x)\\
&=\left(\left(F_{\alpha}^{\ast}(t|\xi|)+F_{\alpha}(0)e^{-|t\xi|^{2}}\right)\cdot\widehat{f}(\xi)\right)^{\vee}(x).
\end{align*}

and
\begin{align*}
(F_{\alpha}^{\ast}(t|\xi|)\cdot\widehat{f}(\xi))^{\vee}(x)&=
\left(\int_{R}A_{\alpha}(u)(t|\xi|)^{iu}du
\widehat{f}(\xi)\right)^{\vee}(x)\\
&=\int_{R}A_{\alpha}(u)t^{iu}\left(|\xi|^{iu}
\widehat{f}(\xi)\right)^{\vee}(x)du
\\
&=\int_{R}A_{\alpha}(u)t^{iu}(2\pi)^{-iu}\left((2\pi |\xi|)^{iu}
\widehat{f}(\xi)\right)^{\vee}(x)du
\\
&=\int_{R}A_{\alpha}(u)t^{iu}(2\pi)^{-iu}I_{iu}f(x)du.
\end{align*}

Since $|(2\pi)^{-iu}t^{iu}| = 1$, by using Minkovski's inequality and Corollary~\ref{c2.4}, we obtain

\begin{align*}
\|(F_{\alpha}^{\ast}(t|\xi|)\cdot\widehat{f}(\xi))^{\vee}(x)\|_{p(\cdot)}&=\left\|\int_{R}A_{\alpha}(u)(2\pi)^{-iu}t^{iu}I_{iu}f(x)du\right \|_{p(\cdot)}\\
&\leq\int_{R}\|A_{\alpha}(u)(2\pi)^{-2i}t^{iu}I_{iu}f(x)\|_{p(\cdot)}du\\
&\le \int_{R}|A_{\alpha}(u)|\|I_{iu}f(\cdot)\|_{p(\cdot)}du\\
&\leq C\int_{R}|A_{\alpha}(u)|(1+|u|)^{\theta n/2+\theta \delta}du\|f\|_{p(\cdot)}.
\end{align*}

Using  the following expression for $A_{\alpha}(u)$ (see \cite{GB}) 
$$
A_{\alpha}(u)=\frac{\Gamma(\alpha+n/2-1/2)\Gamma(-\frac{iu}{2})}{4\pi^{1/2}}\left[\frac{2^{-iu}}{\Gamma(\alpha+n/2+iu/2)}-\frac{1}{\Gamma(\alpha+n/2)}\right]
$$
we have $A_{\alpha}(u)=O\left((1+|u|)^{-\alpha-n/2}\right).$

Finally we get 
\begin{align*}
\|(F_{\alpha}^{\ast}(t|\xi|)\cdot\widehat{f}(\xi))^{\vee}(x)\|_{p(\cdot)}
 &\leq C\int_{R}(1+|u|)^{-\alpha-n/2} (1+|u|)^{\theta n/2+\theta \delta}du\|f\|_{p(\cdot)}\\
&= C\int_{R}(1+|u|)^{-\alpha-n/2+\theta n/2+\theta \delta}du\|f\|_{p(\cdot)}
\end{align*} 
If we take $\delta$ very small, such that $  -\alpha-n/2+\theta n/2+\theta \delta<-1$, we  obtain that 
$$\int_{R}(1+|u|)^{-\alpha-n/2+\theta n/2+\theta \delta}du<\infty$$
and therefore 
\begin{equation} \label{3.1}
\|(F_{\alpha}^{\ast}(t|\xi|)\cdot\widehat{f}(\xi))^{\vee}(x)\|_{p(\cdot)}\le C\|f\|_{p(\cdot)}.
\end{equation}
It is known that (see \cite[p. 61]{S2}).
$$\left|\left(F_{\alpha}(0)e^{-|t\xi|^{2}}\cdot\widehat{f}(\xi)\right)^{\vee}(x)\right|\le C(P_t\ast |f|)(x),$$
where $P_t$ is the Poisson kernel.
Since (see \cite[Theorem 1, p.62]{S2})
$$\sup_{t>0}P_t\ast |f|(x) \le c Mf(x),$$
 where $M$  is the Hardy-Littlewood maximal function, using the assumption $\widetilde{p}(\cdot) \in\mathcal{B}(\mathbb{R}^{n})$, the identity $\frac{1}{p(\cdot)}=\frac{1-\theta}{2}+\frac{\theta}{\widetilde{p}(\cdot)}$ and the complex interpolation method  
 $$ L^{p(\cdot)}(\R^n)=[L^2(\R^n),L^{\widetilde{p}(\cdot)}(\R^n)]_{\theta}$$,
 we obtain that $p(\cdot)\in \B(\R^n)$, and therefore 
\begin{equation} \label{3.2}
\left\|\left(F_{\alpha}(0)e^{-|t\xi|^{2}}\cdot\widehat{f}(\xi)\right)^{\vee}(x)\right\|_{p(\cdot)}
\le C\|Mf\|_{p(\cdot)}\le C\|f\|_{p(\cdot)}.
\end{equation}

Using \eqref{3.1} and \eqref{3.2} we get 
$$\|\mathcal{M}^{\alpha}f\|_{p(\cdot)}\le C\|f\|_{p(\cdot)}.$$
\end{proof}

As a consequence, we can get another boundedness result for $\mathcal{M}^{\alpha}$ in the variable Lebesgue spaces. We need the following  

\begin{lem} \label{l3.3} Let $p\in\cP^{\log}(\R^n)$. If   $1-n/2<\alpha<1$ and   $\frac{n}{n-1+\alpha}<p_{-}\leq
p_{+}<\frac{n}{1-\alpha}$, then there are  exist  $\theta$,    $0<\theta<1-\frac{2}{n}+\frac{2}{n}\alpha$ and variable exponent 
 $\widetilde{p}(\cdot)\in\cP^{\log}(\mathbb{R}^{n})$, such that we have  
\begin{equation} \label{l1}
\frac{1}{p(\cdot)}=\frac{1-\theta}{2}+\frac{\theta}{\widetilde{p}(\cdot)}.
\end{equation}
\end{lem}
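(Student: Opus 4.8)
The plan is to read \eqref{l1} as an equation that \emph{defines} $\widetilde p(\cdot)$ once the single number $\theta$ has been fixed, and then to show that the three hypotheses on $\alpha$, $p_-$ and $p_+$ are precisely what is needed to choose $\theta$ admissibly. Solving \eqref{l1} for the reciprocal exponent gives
\[
\frac{1}{\widetilde p(x)} = \frac{1}{\theta}\left(\frac{1}{p(x)} - \frac{1-\theta}{2}\right),
\]
so the construction reduces to producing a number $\theta\in\bigl(0,\,1-\tfrac{2}{n}+\tfrac{2}{n}\alpha\bigr)$ for which the right-hand side takes values in $[0,1]$ (equivalently $\widetilde p(x)\in[1,\infty]$) uniformly in $x$.

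First I would record the two pointwise constraints that force $\widetilde p(x)\in[1,\infty]$ for every $x$. The requirement $\frac{1}{\widetilde p(x)}\ge 0$ is equivalent to $p(x)\le\frac{2}{1-\theta}$, and $\frac{1}{\widetilde p(x)}\le 1$ is equivalent to $p(x)\ge\frac{2}{1+\theta}$; imposed uniformly in $x$ (using $0<\theta<1$) these read $\theta\ge 1-\frac{2}{p_+}$ and $\theta\ge\frac{2}{p_-}-1$. Hence I would seek $\theta$ in the open interval
\[
\max\!\left(0,\;1-\frac{2}{p_+},\;\frac{2}{p_-}-1\right) < \theta < 1-\frac{2}{n}+\frac{2}{n}\alpha .
\]
The heart of the matter, and the only step I expect to require care, is to verify that this interval is nonempty; here each hypothesis does exactly one job. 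The assumption $\alpha>1-n/2$ is equivalent to $1-\frac{2}{n}+\frac{2}{n}\alpha>0$; the bound $p_+<\frac{n}{1-\alpha}$ rearranges exactly to $1-\frac{2}{p_+}<1-\frac{2}{n}+\frac{2}{n}\alpha$; and $p_->\frac{n}{n-1+\alpha}$ rearranges exactly to $\frac{2}{p_-}-1<1-\frac{2}{n}+\frac{2}{n}\alpha$. Thus all three lower bounds lie strictly below the upper bound, and any $\theta$ in the resulting open interval is admissible.

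Having fixed such a $\theta$ strictly inside this interval, the resulting inequalities $0<\frac{1}{\widetilde p(x)}<1$ are strict and uniform, so $1<\widetilde p_-\le\widetilde p_+<\infty$ and in particular $\widetilde p(\cdot)$ is a bounded variable exponent. It remains to check the regularity: $\frac{1}{\widetilde p(\cdot)}$ is an affine function of $\frac{1}{p(\cdot)}$ with the \emph{constant} coefficients $\frac{1}{\theta}$ and $-\frac{1-\theta}{2\theta}$, and such an affine change with constant coefficients preserves both the local log-H\"older estimate and the log-H\"older decay condition. Since $\frac{1}{p(\cdot)}$ is globally log-H\"older by $p(\cdot)\in\cP^{\log}(\R^n)$, the same holds for $\frac{1}{\widetilde p(\cdot)}$, i.e.\ $\widetilde p(\cdot)\in\cP^{\log}(\R^n)$, which completes the proof.
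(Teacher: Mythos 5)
Your proposal is correct and follows essentially the same route as the paper: both read \eqref{l1} as defining $\frac{1}{\widetilde p(\cdot)}$ as an affine function of $\frac{1}{p(\cdot)}$ with constant coefficients, observe that the three hypotheses are exactly what is needed to pick $\theta$ close enough to the endpoint $1-\frac{2}{n}+\frac{2}{n}\alpha$ so that $\frac{1}{\widetilde p}$ takes values strictly between $0$ and $1$, and deduce the log-H\"older regularity from the fact that differences of $\frac{1}{\widetilde p}$ are a constant multiple of differences of $\frac{1}{p}$. The paper phrases the choice of $\theta$ via the substitution $\frac{1}{p(x)}=\frac12+r(x)$ and a small perturbation $\theta_0<2\delta$, while you exhibit the admissible open interval for $\theta$ directly, but the computation is the same.
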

\begin{proof} We need to find $\theta$ such that $0<\theta<1-\frac{2}{n}+\frac{2}{n}\alpha$ and exponent
 $\widetilde{p}(\cdot)\in\cP^{\log}(\R^n) $  such that \eqref{l1} holds.

 We have
$$
\frac{1}{n}-\frac{\alpha}{n}<\inf_{x\in
\mathbb{R}^{n}}\frac{1}{p(x)}\leq \sup_{x\in
\mathbb{R}^{n}}\frac{1}{p(x)}<1-\frac{1}{n}+\frac{\alpha}{n}.
$$

Let $\frac{1}{p(x)}=\frac{1}{2}+r(x).$  It is easy to see that
\begin{equation} \label{l2}
\frac{1}{n}-\frac{\alpha}{n}-\frac{1}{2}<\inf_{x\in\mathbb{R}^{n}}r(x)\leq\sup_{x\in\mathbb{R}^{n}}r(x)<\frac{1}{2}-\frac{1}{n}+\frac{\alpha}{n}.
\end{equation}

Equation \eqref{l1}  is equivalent to
\begin{equation} \label{l3}
\frac{1}{2}+\frac{r(x)}{\theta}=\frac{1}{\widetilde{p}(x)}.
\end{equation}

Using \eqref{l2} we may  take a small $\delta>0$  such that
$$
\frac{1}{n}-\frac{\alpha}{n}-\frac{1}{2}+\delta<\inf_{x\in\mathbb{R}^{n}}r(x)\leq\sup_{x\in\mathbb{R}^{n}}r(x)<\frac{1}{2}-\frac{1}{n}+\frac{\alpha}{n}-\delta.
$$
Then for
$\theta,\,\,0<\theta<1-\frac{2}{n}+\frac{2}{n}\alpha,\,\,\theta=1-\frac{2}{n}+\frac{2}{n}\alpha-\theta_{0},\,\,\,\theta_{0}>0$
we have
$$
\frac{\frac{1}{n}-\frac{\alpha}{n}-\frac{1}{2}+\delta}{1-\frac{2}{n}+\frac{2}{n}\alpha-\theta_{0}}<\inf_{x\in\mathbb{R}^{n}}\frac{r(x)}{\theta}
\leq\sup_{x\in\mathbb{R}^{n}}\frac{r(x)}{\theta}<\frac{\frac{1}{2}-\frac{1}{n}+\frac{\alpha}{n}-\delta}{1-\frac{2}{n}+\frac{2}{n}\alpha-\theta_{0}}
$$
and taking $\theta_{0}<2\delta$  we get
\begin{equation} \label{l4}
-\frac{1}{2}<\inf_{x\in\mathbb{R}^{n}}\frac{r(x)}{\theta}\leq
\sup_{x\in\mathbb{R}^{n}}\frac{r(x)}{\theta}<\frac{1}{2}.
\end{equation}

From \eqref{l3} and \eqref{l4} it follows 
 $$
0<\inf_{x\in\mathbb{R}^{n}}\frac{1}{\widetilde{p}(x)}\leq\sup_{x\in\mathbb{R}^{n}}\frac{1}{\widetilde{p}(x)}<1.
 $$
Finally, it is not hard to prove that $\widetilde{p}(\cdot)\in\cP^{\log}(\R^n) $.  Indeed we may use the simple
	equality
$$
\left|\frac{1}{p(x)}-\frac{1}{p(y)}\right|=\theta\left|\frac{1}{\widetilde{p}(x)}-\frac{1}{\widetilde{p}(y)}\right|.
$$
\end{proof}

\begin{rem} In fact in the  Lemma~\ref{l3.3} we use only the simple fact that if $p\in \cP^{\log}(\R^n)$ then  also $\widetilde{p}(\cdot)\in\cP^{\log}(\R^n) $. For applications we need to consider  such classes of exponents with this property, because they make the  Hardy-Littlewood maximal function bounded on variable exponent Lebesgue spaces. In general  the class of variable exponents $\B(\R^)$, for which the  Hardy-Littlewood maximal function are bounded on variable exponent Lebesgue spaces have not such property (see \cite{L} and the digression in \cite[Chap.~4]{CUF}). The opposite statement by interpolation theorem is always true, that is if  $\widetilde{p}(\cdot)\in\B(\R^n) $, then   $p\in \B(\R^n)$. 
For example (see \cite[Remark 4.3.10, page 114]{DHHR}), it is possible to replace the log-H\"older decay condition  by the weaker condition
$1\in L^{s(\cdot)}(\R^n)$ with 
$$\frac{1}{s(x)}:=\left|\frac{1}{p(x)}-\frac{1}{p_\infty}\right|
.$$ 
\end{rem} 
\medskip

\begin{cor} \label{c1.6}
Let  $n\geq3$,  $1-n/2<\alpha<1.$ , $p(\cdot)\in \cP^{\log}(\R^n)$ and $\frac{n}{n-1+\alpha}<p_{-}\leq
p_{+}<\frac{n}{1-\alpha}.$ Then spherical maximal functions 
$\mathcal{M}^{\alpha}$ are  bounded on $L^{p(\cdot)}(\mathbb{R}^{n}).$
\end{cor}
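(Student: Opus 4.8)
The plan is to deduce this corollary directly from Lemma~\ref{l3.3} and Theorem~\ref{th3.2}, treating the corollary as the assembly step that converts the hypotheses stated in terms of the pointwise bounds $\frac{n}{n-1+\alpha}<p_-\le p_+<\frac{n}{1-\alpha}$ into the single condition on $\widetilde p(\cdot)$ demanded by Theorem~\ref{th3.2}.

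First I would invoke Lemma~\ref{l3.3}. Since $p(\cdot)\in\cP^{\log}(\R^n)$, $1-n/2<\alpha<1$, and the bounds $\frac{n}{n-1+\alpha}<p_-\le p_+<\frac{n}{1-\alpha}$ hold, the lemma produces an exponent $\theta$ with $0<\theta<1-\frac2n+\frac2n\alpha$ and a variable exponent $\widetilde p(\cdot)\in\cP^{\log}(\R^n)$ satisfying
$$\frac{1}{p(\cdot)}=\frac{1-\theta}{2}+\frac{\theta}{\widetilde p(\cdot)}.$$
Solving this identity for $\widetilde p(\cdot)$ gives exactly $\widetilde p(\cdot)=\frac{2p(\cdot)\theta}{2-(1-\theta)p(\cdot)}$, which is precisely the expression appearing in Theorem~\ref{th3.2}; thus the $\theta$ and $\widetilde p(\cdot)$ furnished by the lemma are exactly the data that the theorem requires.

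Next I would check that $\widetilde p(\cdot)\in\mathcal B(\R^n)$, the only genuine hypothesis of Theorem~\ref{th3.2} left to verify. The construction in the proof of Lemma~\ref{l3.3} already establishes $0<\inf_x\frac{1}{\widetilde p(x)}\le\sup_x\frac{1}{\widetilde p(x)}<1$, equivalently $1<\widetilde p^-\le\widetilde p^+<\infty$. Combined with $\widetilde p(\cdot)\in\cP^{\log}(\R^n)$, the classical boundedness theorem for the Hardy-Littlewood maximal operator on variable Lebesgue spaces (recalled in the Introduction, valid for $\cP^{\log}$ exponents with $\widetilde p^->1$) applies to $L^{\widetilde p(\cdot)}(\R^n)$, so $M$ is bounded there and hence $\widetilde p(\cdot)\in\mathcal B(\R^n)$.

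With $\theta$ and $\widetilde p(\cdot)\in\mathcal B(\R^n)$ in hand and $1-n/2<\alpha<1$, Theorem~\ref{th3.2} immediately yields that $\mathcal{M}^{\alpha}$ is bounded on $L^{p(\cdot)}(\R^n)$, completing the argument. I do not expect a serious obstacle here: all of the delicate work — the two-sided estimate that forces the exponents of $\widetilde p(\cdot)$ to lie strictly between $1$ and $\infty$, together with the preservation of log-H\"older continuity — is already carried out inside Lemma~\ref{l3.3}. The corollary is essentially bookkeeping, the one point worth stating carefully being the passage from membership in $\cP^{\log}(\R^n)$ with $\widetilde p^->1$ to membership in $\mathcal B(\R^n)$.
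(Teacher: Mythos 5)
Your proposal is correct and follows essentially the same route as the paper: apply Lemma~\ref{l3.3} to produce $\theta$ and $\widetilde p(\cdot)$, pass from $\widetilde p(\cdot)\in\cP^{\log}(\R^n)$ with $1<\widetilde p^-\le\widetilde p^+<\infty$ to $\widetilde p(\cdot)\in\mathcal B(\R^n)$ via the classical maximal-operator theorem, and then invoke Theorem~\ref{th3.2}. The paper compresses all of this into one sentence, so your explicit verification of the bridging step $\cP^{\log}\Rightarrow\mathcal B$ is a welcome addition rather than a deviation.
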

\begin{proof}
Using Lemma~\ref{l3.3} and the complex interpolation method for variable exponent Lebesgue spaces, from Theorem~\ref{th3.2} we deduce the desired result. 
\end{proof}

\begin{thm} \label{th3.4}
Let $n\geq2$ and   $p(\cdot)\in\cP^{\log}(\R^n)$ 
with $\frac{n}{n-1+\alpha}<p_{-}\leq p_{+}
<p_{-}\frac{n-1+\alpha}{1-\alpha},\,\,0\le \alpha<1.$ Then the
spherical maximal operator $\mathcal{M}^{\alpha}$ is bounded on
$L^{p(\cdot)}(\mathbb{R}^{n}).$
\end{thm}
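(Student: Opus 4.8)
The plan is to deduce Theorem~\ref{th3.4} from Corollary~\ref{c1.6} by complex interpolation against the elementary $L^\infty$ endpoint, the point being that the range in the statement is \emph{exactly} the range for which a suitable interpolation parameter exists. I would start by recording that $\mathcal{M}^{\alpha}$ is bounded on $L^\infty(\R^n)$ for every $0\le\alpha<1$. Indeed, for $0<\alpha<1$ the kernel $m_\alpha$ is supported in the unit ball and belongs to $L^1(\R^n)$ (as $(1-|x|^2)^{\alpha-1}$ is integrable there for $\alpha>0$), so $\|m_{\alpha,t}\|_{1}=\|m_\alpha\|_1$ is independent of $t$ and $|\mathcal{M}_t^\alpha f(x)|\le\|m_\alpha\|_1\|f\|_\infty$; for $\alpha=0$ the means are taken against a probability measure and the bound holds with constant $1$. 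Taking the supremum in $t$ gives $\|\mathcal{M}^\alpha f\|_\infty\le C\|f\|_\infty$.

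Next I would introduce the auxiliary exponent $q(\cdot):=(1-\eta)\,p(\cdot)$ and seek $\eta\in(0,1)$ for which $q(\cdot)$ is admissible in Corollary~\ref{c1.6}, i.e. $\frac{n}{n-1+\alpha}<q_-\le q_+<\frac{n}{1-\alpha}$ (for $n=2$ the same base estimate follows from Theorem~\ref{th3.2} and Lemma~\ref{l3.3}). Since $q_\pm=(1-\eta)p_\pm$, these conditions read $1-\eta>\frac{n}{(n-1+\alpha)p_-}$ and $1-\eta<\frac{n}{(1-\alpha)p_+}$. An admissible value of $1-\eta$ in the open interval determined by these two numbers exists if and only if $\frac{n}{(n-1+\alpha)p_-}<\frac{n}{(1-\alpha)p_+}$, i.e. if and only if $p_+<\frac{n-1+\alpha}{1-\alpha}\,p_-$, while the hypothesis $p_->\frac{n}{n-1+\alpha}$ ensures $\frac{n}{(n-1+\alpha)p_-}<1$ so that $1-\eta$ (hence $\eta$) can be chosen in $(0,1)$. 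Thus the two inequalities defining the range in Theorem~\ref{th3.4} are precisely the nonemptiness condition for the set of interpolation parameters. Moreover $\frac{1}{q(\cdot)}=\frac{1}{1-\eta}\cdot\frac{1}{p(\cdot)}$ differs from $\frac1{p(\cdot)}$ only by a constant factor, so $q(\cdot)$ inherits global log-H\"older continuity, $q(\cdot)\in\cP^{\log}(\R^n)$, and Corollary~\ref{c1.6} applies.

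I would then interpolate. The relation $\frac1{p(\cdot)}=\frac{1-\eta}{q(\cdot)}+\frac{\eta}{\infty}$ together with the complex interpolation theorem for variable Lebesgue spaces (the tool used in Corollary~\ref{c2.4}, \cite[Theorem~1.2, p.~215]{DHHR}) gives $L^{p(\cdot)}(\R^n)=[L^{q(\cdot)}(\R^n),L^\infty(\R^n)]_\eta$. To handle the sublinearity of $\mathcal{M}^\alpha$, I would linearize: for a measurable near-maximizing choice of dilation $t=t(x)$ the operator $Tf(x):=(m_{\alpha,t(x)}\ast f)(x)$ is linear, satisfies $|Tf|\le\mathcal{M}^\alpha f$ pointwise while capturing $\mathcal{M}^\alpha f$ up to an arbitrarily small error, and by the lattice property of the norms inherits the two endpoint bounds $\|Tf\|_{q(\cdot)}\le C\|f\|_{q(\cdot)}$ and $\|Tf\|_\infty\le C\|f\|_\infty$ with constants independent of the choice $t(\cdot)$. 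Complex interpolation of $T$ then yields $\|Tf\|_{p(\cdot)}\le C\|f\|_{p(\cdot)}$, and letting $t(\cdot)$ exhaust the supremum gives $\|\mathcal{M}^\alpha f\|_{p(\cdot)}\le C\|f\|_{p(\cdot)}$ for $f\in\mathcal S(\R^n)$, whence for general $f$ by density.

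The step I expect to demand the most care is the linearization: one must set up the measurable selection of dilations so that the pointwise supremum defining $\mathcal{M}^\alpha$ is recovered, uniformly in the selection, and then apply the interpolation functor to the resulting single linear operator rather than to the sublinear $\mathcal{M}^\alpha$ directly. Everything else is either a short computation (the existence of $\eta$) or a direct citation (the $L^\infty$ bound, the interpolation identity, and the base estimate of Corollary~\ref{c1.6}).
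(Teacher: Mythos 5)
Your proposal is correct and follows essentially the same route as the paper: rescale the exponent by a constant factor (your $q(\cdot)=(1-\eta)p(\cdot)$ is the paper's $\overline{p}(\cdot)=\theta p(\cdot)$ with $\theta=\frac{n\gamma}{p_-(n-1+\alpha)}$) so that it lands in the admissible range of Theorem~\ref{th3.2}/Corollary~\ref{c1.6}, verify that the hypotheses on $p_\pm$ are exactly the nonemptiness condition for that rescaling, and then complex-interpolate against the elementary $L^\infty$ bound for $\mathcal{M}^\alpha$. The only substantive difference is that you add a linearization via measurable selection of dilations before interpolating, a point of rigor the paper passes over silently when applying the complex method to the sublinear operator.
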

\begin{proof} 
Fix $\gamma$ such that 
$1<\gamma<\frac{p_{-}(n-1+\alpha)}{n}$ and $p_{+}\gamma< \frac{p_{-}(n-1+\alpha)}{1-\alpha}.$
Define the variable exponent 
$$\overline{p}(x):= \frac{p(x)n\gamma}{p_{-}(n-1+\alpha)}.$$
It is clear that $\overline{p}\in \cP^{\log}(\R^n)$,
\begin{align*}\overline{p}_{-}= \frac{n\gamma}{n-1+\alpha}> \frac{n}{n-1+\alpha}\\
\intertext{and}
\overline{p}_{+}= \frac{p_{+}n\gamma}{p_{-}(n-1+\alpha)}< \frac{n}{n-\alpha} .
\end{align*}
By Lemma~\ref{l3.3} there exists $\theta$, $0<\theta<1-\frac{2}{n}+ \frac{2}{n}\alpha$ and a variable exponent $\widetilde{p}\in \cP^{\log}(\R^n)$, such that 
$$\frac{1}{\overline{p}(\cdot)}=\frac{1-\theta}{2}+\frac{\theta}{\widetilde{p}(\cdot)}.  $$

Therefore, by the complex interpolation theorem for variable exponent Lebesgue spaces,  we have
$$L^{\overline{p}(\cdot)}(\mathbb{R}^{n}=[L^{2}(\mathbb{R}^{n},\,\,L^{\widetilde{p}(\cdot)}(\mathbb{R}^{n})]_{\theta}.$$  
From Theorem~\ref{th3.2} we deduce that the spherical maximal functions $\M^\alpha$ are  bounded on $L^{\overline{p}(\cdot)}(\mathbb{R}^{n})$.
   
By the complex interpolation method for variable exponent Lebesgue spaces, we have  
$$[L^{\infty}(\mathbb{R}^{n}),L^{\overline{p}(\cdot)}(\mathbb{R}^{n})]_{\theta}=L^{p(\cdot)}(\mathbb{R}^{n})$$
for $\theta=\frac{n\gamma}{p_{-}(n-1+\alpha)}\in (0,1)$. Finally, by using the fact that if $0\le \alpha<1$ the spherical maximal functions $\mathcal{M^{\alpha}}$ are  bounded on $L^{\infty}(\mathbb{R}^{n})$, we
obtain that the spherical maximal functions $\M^\alpha$ are  bounded on $L^{p(\cdot)}(\mathbb{R}^{n})$.
\end{proof}

Taking $\alpha=0$ in Theorem~\ref{th3.4}, we get immediately the following

\begin{cor}\label{c3.5}
Let $n\geq3$, $p(\cdot)\in \cP^{\log}(\R^n)$ and
$\frac{n}{n-1}<p_{-}\leq p_{+} <p_{-}(n-1).$ Then the spherical maximal
function $\mathcal{M}$ is bounded on $L^{p(\cdot)}(\mathbb{R}^{n}).$
\end{cor}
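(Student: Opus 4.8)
The plan is to derive this statement directly from Theorem~\ref{th3.4} as its special case $\alpha=0$, so essentially no fresh argument is needed beyond checking that the hypotheses specialize correctly and that $\mathcal{M}^{0}$ coincides, up to a constant, with $\mathcal{M}$.

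First I would invoke the observation recorded just after the definition \eqref{sphmaxop}: for $\alpha=0$ one has $\mathcal{M}^{0}f(x)=c\,\mathcal{M}f(x)$ for an appropriate constant $c$. Hence boundedness of $\mathcal{M}^{0}$ on $L^{p(\cdot)}(\R^n)$ is equivalent to boundedness of $\mathcal{M}$ on the same space, and it suffices to verify that the assumptions of Theorem~\ref{th3.4} are satisfied with $\alpha=0$.

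Next I would substitute $\alpha=0$ into the index conditions of Theorem~\ref{th3.4}. The lower bound $\frac{n}{n-1+\alpha}<p_{-}$ becomes $\frac{n}{n-1}<p_{-}$, while the upper bound $p_{+}<p_{-}\frac{n-1+\alpha}{1-\alpha}$ becomes $p_{+}<p_{-}(n-1)$. These are precisely the hypotheses of the Corollary, and since $p(\cdot)\in\cP^{\log}(\R^n)$ is assumed in both statements, the conditions of Theorem~\ref{th3.4} hold verbatim. Applying that theorem then gives boundedness of $\mathcal{M}^{0}=c\,\mathcal{M}$ on $L^{p(\cdot)}(\R^n)$, hence of $\mathcal{M}$.

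The only point deserving a moment's care — and the closest thing here to an obstacle — is the strengthening of the dimensional hypothesis from $n\ge 2$ in Theorem~\ref{th3.4} to $n\ge 3$ in the Corollary. This is not an additional assumption but a nonemptiness requirement: for $\alpha=0$ and $n=2$ the upper bound reads $p_{+}<p_{-}(n-1)=p_{-}$, which is incompatible with $p_{-}\le p_{+}$, so the admissible range of exponents collapses. For $n\ge 3$ the interval $(\frac{n}{n-1},\,p_{-}(n-1))$ is genuinely nonempty and the statement carries content. Once this is noted, the reduction to Theorem~\ref{th3.4} is complete.
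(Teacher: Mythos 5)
Your proposal is correct and is exactly the paper's argument: the authors obtain Corollary~\ref{c3.5} by setting $\alpha=0$ in Theorem~\ref{th3.4}, under which the index conditions specialize precisely to $\frac{n}{n-1}<p_{-}\leq p_{+}<p_{-}(n-1)$. Your additional remark that the restriction $n\geq 3$ merely reflects the emptiness of the admissible exponent range when $n=2$ is a correct and worthwhile clarification, though the paper does not spell it out.
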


\medskip
\section{An application}
 
Spherical averages often make their appearance as solutions of certain 
partial differential equations. We will state two corollaries, whose proofs are immediate consequences of the boundedness of $\mathcal{M}^\alpha$ in the variable exponent Lebesgue spaces and of (\ref{sphmaxop}).

a) Let $\alpha=\frac{3-n}{2}$.  For an appropriate constant $c_n$, we have that 
$u(x,t)=c_nt\mathcal{M}^{\alpha}_{t}(x)$, where $ u$ is the solution of the wave equation
\begin{align*}
\frac{\partial^{2}u}{\partial t^{2}}(x,t)&=\Delta_{x}(u)(x,t),
\\
u(x,0)&=0, \\
\frac{\partial u}{\partial t}(x,t)&=f(x),
\end{align*}
(see \cite{S2}, page 519).

\begin{cor} 
Let  $n\geq3$ and   $p(\cdot)\in\cP^{\log}(\R^n)$.  If $\frac{2n}{n+1}<p_{-}\leq p_{+}<\frac{2n}{n-1}$,
 then for the weak solution  $u=u(x,t)$ of the wave equation
with  the initial data in  $f\in L^{p(\cdot)}(\mathbb{R}^n)$, we  have the following  a priori estimate:
\begin{equation}\label{finalest}
\left\| \sup_{t>0} \frac{| u(x, t)|}{t}\right\|_{p(\cdot)}\le C  \| f\|_{p(\cdot)},
\end{equation}
where $C$ depends on $p(\cdot)$ only.

Also, $\lim_{t\to 0}\frac{| u(x, t)|}{t}=f(x)$ a.e. $x\in \R^n$. 
\end{cor}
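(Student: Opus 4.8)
The plan is to reduce both assertions to the boundedness of the spherical maximal operator $\mathcal{M}^{\alpha}$ of order $\alpha=\frac{3-n}{2}$ established in Corollary~\ref{c1.6}. First I would record the elementary identities that make the hypotheses match: for this value of $\alpha$ one has $n-1+\alpha=\frac{n+1}{2}$ and $1-\alpha=\frac{n-1}{2}$, so that
\[
\frac{n}{n-1+\alpha}=\frac{2n}{n+1},\qquad \frac{n}{1-\alpha}=\frac{2n}{n-1}.
\]
Hence the assumption $\frac{2n}{n+1}<p_{-}\le p_{+}<\frac{2n}{n-1}$ is precisely the admissibility range of Corollary~\ref{c1.6} for $\alpha=\frac{3-n}{2}$; since $n\ge3$ and $p(\cdot)\in\cP^{\log}(\R^n)$, that corollary provides a constant $C$ with $\|\mathcal{M}^{\alpha}f\|_{p(\cdot)}\le C\|f\|_{p(\cdot)}$.

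For the a priori estimate I would use the representation recorded in part (a), namely $u(x,t)=c_n\,t\,\mathcal{M}^{\alpha}_t f(x)$, which I take as the definition of the weak solution for data $f\in L^{p(\cdot)}(\R^n)$. Then $\frac{|u(x,t)|}{t}=|c_n|\,|\mathcal{M}^{\alpha}_t f(x)|$ and, taking the supremum over $t>0$,
\[
\sup_{t>0}\frac{|u(x,t)|}{t}=|c_n|\,\mathcal{M}^{\alpha}f(x)
\]
pointwise, so \eqref{finalest} follows at once by applying the $L^{p(\cdot)}$-norm and the boundedness of $\mathcal{M}^{\alpha}$.

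For the pointwise limit I would argue by density. Writing $T_t f:=c_n\mathcal{M}^{\alpha}_t f$, the operator $T_t$ is the Fourier multiplier with symbol $c_n\widehat{m}_\alpha(t\xi)$, and $c_n$ is exactly the constant normalizing the spherical means of order $\alpha$, i.e. $c_n\widehat{m}_\alpha(0)=1$. Since $\widehat{m}_\alpha$ is bounded and $\widehat{g}\in S(\R^n)$, dominated convergence on the Fourier side gives $T_t g(x)\to g(x)$ for every $x$ when $g\in S(\R^n)$. For general $f\in L^{p(\cdot)}(\R^n)$ set $\Lambda f(x):=\limsup_{t\to0}|T_t f(x)-f(x)|$. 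By linearity of $T_t$ and $\Lambda g\equiv0$ for $g\in S(\R^n)$ one gets the pointwise bound
\[
\Lambda f(x)\le |c_n|\,\mathcal{M}^{\alpha}(f-g)(x)+|f-g|(x),
\]
whence, by the maximal inequality of Corollary~\ref{c1.6} and the triangle inequality,
\[
\|\Lambda f\|_{p(\cdot)}\le (C|c_n|+1)\,\|f-g\|_{p(\cdot)}.
\]
Because $p_{+}<\infty$, the Schwartz class is dense in $L^{p(\cdot)}(\R^n)$, so the right-hand side can be made arbitrarily small; therefore $\|\Lambda f\|_{p(\cdot)}=0$, i.e. $\Lambda f=0$ a.e. This yields $\frac{u(x,t)}{t}=T_t f(x)\to f(x)$ a.e. as $t\to0$, which is the stated convergence.

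I expect the main obstacle to be the verification that $c_n$ is exactly the normalizing constant, so that the limit is $f$ and not a scalar multiple of it: this amounts to computing $\widehat{m}_\alpha(0)=\pi^{n/2}/\Gamma(n/2+\alpha)$ from the small-argument asymptotics $J_\nu(z)\sim(z/2)^\nu/\Gamma(\nu+1)$ and matching it against the constant appearing in the wave-equation representation of part (a). By contrast, the oscillation argument itself is clean in this setting, because Corollary~\ref{c1.6} gives a genuine strong-type $L^{p(\cdot)}$ bound, so no weak-type or Chebyshev estimate in the variable-exponent framework is required.
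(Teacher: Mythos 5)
Your proposal is correct and follows the same route the paper intends: the hypotheses translate exactly into the admissibility range of Corollary~\ref{c1.6} for $\alpha=\frac{3-n}{2}$, and the representation $u(x,t)=c_n t\mathcal{M}^\alpha_t f(x)$ turns \eqref{finalest} into the maximal inequality, which is all the paper itself offers (it declares the proof an ``immediate consequence'' of the boundedness of $\mathcal{M}^\alpha$ and of \eqref{sphmaxop}). Your density argument for the a.e.\ limit, using the strong $L^{p(\cdot)}$ bound on $\mathcal{M}^\alpha$, convergence on the Schwartz class, and density of $S(\R^n)$ in $L^{p(\cdot)}$ for $p_+<\infty$, correctly supplies a step the paper states but does not prove.
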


The estimate (\ref{finalest}) tells that the integrability property of the initial velocity of propagation of a wave is preserved in the solution, under an assumption of regularity of the exponent, expressed in terms of a logarithmic Dini-type condition. In some sense (which is made precise by such logarithmic Dini-type condition), in the places where the initial velocity does not blow up (in the sense of the boundedness of the norm in a Lebesgue space which may vary poinwise), also the wave does not. For a more detailed digression, see \cite{FGK}.

b) Let $\alpha=\frac{3-n}{2}$. For an appropriate constant $c_n$, the spherical average $u(x,t)=c_n\mathcal{M}^{\alpha}_{t}(x)$ solves  Darboux's equation (see \cite{J})
\begin{align*}
\frac{\partial^{2}u}{\partial t^{2}}(x,t)+\frac{2}{t}\frac{\partial u}{\partial t}(x,t)&=\Delta_{x}(u)(x,t),
\\
u(x,0)&=f(x), \\
\frac{\partial u}{\partial t}(x,t)&=0.
\end{align*}

\begin{cor}
Let  $n\geq3$ and   $p(\cdot)\in\cP^{\log}(\R^n)$. If $\frac{2n}{n+1}<p_{-}\leq p_{+}<\frac{2n}{n-1}$, 
 then for the weak solution $u=u(x,t)$
of the Darboux's equation  with  the initial data $f\in L^{p(\cdot)}(\mathbb{R}^n)$
 we  have the following  a priori estimate:
$$
\| \sup_{t>0}u(x, t)\|_{p(\cdot)}\le C \| f\|_{p(\cdot)},
$$
where $C$ depends on $p(\cdot)$ only.

Also, $\lim_{t\to 0}| u(x,t)|{t}=f(x)$ a.e. $x\in \R^n$. 
\end{cor}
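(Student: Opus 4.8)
The plan is to reduce the whole statement to the boundedness of the spherical maximal operator $\mathcal{M}^{\alpha}$ already established in Corollary~\ref{c1.6}. Following part b), I would fix $\alpha=\frac{3-n}{2}$ and use the representation of the weak solution of Darboux's equation as a spherical average, $u(x,t)=c_n\,\mathcal{M}^{\alpha}_t f(x)$. Taking the supremum over $t>0$ then gives
$$
\sup_{t>0}|u(x,t)|=|c_n|\,\mathcal{M}^{\alpha}f(x),
$$
so that the claimed a priori estimate is literally the assertion that $\mathcal{M}^{\alpha}$ is bounded on $L^{p(\cdot)}(\R^n)$.

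The next step is to check that the present hypotheses coincide with those of Corollary~\ref{c1.6}. For $\alpha=\frac{3-n}{2}$ one computes $n-1+\alpha=\frac{n+1}{2}$ and $1-\alpha=\frac{n-1}{2}$, whence $\frac{n}{n-1+\alpha}=\frac{2n}{n+1}$ and $\frac{n}{1-\alpha}=\frac{2n}{n-1}$; moreover $1-\frac n2<\alpha<1$ holds for every $n\ge 3$. Thus the assumption $p(\cdot)\in\cP^{\log}(\R^n)$ with $\frac{2n}{n+1}<p_-\le p_+<\frac{2n}{n-1}$ is exactly the hypothesis of Corollary~\ref{c1.6}, and the inequality $\|\sup_{t>0}u(x,t)\|_{p(\cdot)}\le C\|f\|_{p(\cdot)}$ follows at once, with $C$ depending only on $n$ and $p(\cdot)$.

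For the pointwise limit $\lim_{t\to 0}u(x,t)=f(x)$ I would argue by approximation. For $f\in\mathcal{S}(\R^n)$ the limit is classical: since $(\mathcal{M}^{\alpha}_t f)^{\wedge}(\xi)=F_{\alpha}(t|\xi|)\widehat f(\xi)$ and $F_{\alpha}$ is continuous at the origin, passing to the limit on the Fourier side yields $c_n\mathcal{M}^{\alpha}_t f\to f$ pointwise (the normalization $c_n$ being chosen so that $c_nF_\alpha(0)=1$), consistently with the initial datum $u(x,0)=f(x)$. For general $f\in L^{p(\cdot)}(\R^n)$ I would pick $g\in\mathcal{S}(\R^n)$ close to $f$ (density of $\mathcal{S}$ in $L^{p(\cdot)}$ being available under the log-H\"older hypotheses), bound the oscillation $\Omega f(x):=\limsup_{t\to 0}|u(x,t)-f(x)|$ by $\Omega(f-g)(x)\le|c_n|\,\mathcal{M}^{\alpha}(f-g)(x)+|(f-g)(x)|$, and let $\|f-g\|_{p(\cdot)}\to 0$; the maximal bound then forces $\Omega f=0$ almost everywhere.

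The hard part will be this last convergence argument, for two reasons. First, the strong $L^{p(\cdot)}$ bound for $\mathcal{M}^{\alpha}$ does not by itself supply the weak-type (level-set) control needed for the standard Banach-principle density argument; I expect to recover it by estimating $\big|\{\Omega f>\lambda\}\big|$ through $\|\chi_{\{\Omega f>\lambda\}}\|_{p(\cdot)}$ and the modular, or via the embedding of $L^{p(\cdot)}$ into a weak variable-exponent space. Second, one must justify that the weak solution $u$ genuinely equals $c_n\mathcal{M}^{\alpha}_t f$ for merely $L^{p(\cdot)}$-data rather than Schwartz data; this I would settle by the same approximation, using the a priori estimate to pass to the limit. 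Everything else is a direct invocation of Corollary~\ref{c1.6}.
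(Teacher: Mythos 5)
Your proposal is correct and follows essentially the same route as the paper, which proves this corollary simply by invoking the representation $u(x,t)=c_n\mathcal{M}^{\alpha}_t f(x)$ with $\alpha=\frac{3-n}{2}$ and the boundedness of $\mathcal{M}^{\alpha}$ from Corollary~\ref{c1.6} (your verification that $\frac{n}{n-1+\alpha}=\frac{2n}{n+1}$ and $\frac{n}{1-\alpha}=\frac{2n}{n-1}$ is exactly the needed check). For the a.e.\ convergence, which the paper states without proof, your density argument is sound and the difficulty you flag is not really one: the strong bound already yields $\lambda\norm{\chi_{\{\Omega f>\lambda\}}}_{p(\cdot)}\le C\norm{f-g}_{p(\cdot)}$ by the Chebyshev inequality in norm form, and since $\norm{\chi_E}_{p(\cdot)}>0$ whenever $\abs{E}>0$, this forces $\abs{\{\Omega f>\lambda\}}=0$.
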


\bigskip

\end{document}